\numberwithin{equation}{section}
\newtheorem{theorem}{Theorem}[section]
\newtheorem{proposition}[theorem]{Proposition}
\newtheorem{lemma}[theorem]{Lemma}
\newtheorem{definition}[theorem]{Definition}
\newtheorem{remark}[theorem]{Remark}
	\def\bp{\begin{proof}}
		\def\ep{\end{proof}}
	\def\Sc{{\mathcal S}}
	\def\Lc{{\mathcal L}}
	\def\Ac{{\mathcal A}}
\begin{document}
		
		\title[Extrinsic black hole uniqueness]{Extrinsic black hole uniqueness in pure Lovelock gravity}
		
		\author{Levi Lopes de Lima and Frederico Gir\~ao}
		\address{Universidade Federal do Cear\'a (UFC),
			Departamento de Matem\'{a}tica, Campus do Pici, Av. Humberto Monte, s/n, Bloco 914, 60455-760,
			Fortaleza, CE, Brazil.}
		
		\author{Jos\'e Nat\'ario}
		\address{CAMGSD, Departamento de Matem\'atica, Instituto Superior T\'ecnico, 1049-001 Lisboa, Portugal}	
		
		\email{levi@mat.ufc.br,fred@mat.ufc.br,jnatar@math.tecnico.ulisboa.pt}
		
		\thanks{L.L. de Lima has been partially supported by CNPq/Brazil grant
			312485/2018-2, F. Gir\~ao  by  CNPq/Brazil grant 307239/2020-9  and J. Nat\'ario by  FCT/Portugal grant UIDP/MAT/04459/2020. The authors have also benefited from
			support coming from FUNCAP/CNPq/PRONEX grant 00068.01.00/15.
		}
		
		\begin{abstract}
			We define a notion of {\em extrinsic} black hole in pure Lovelock gravity of degree $k$ which captures the essential features of the so-called Lovelock-Schwarzs\-child solutions, viewed as rotationally invariant hypersurfaces with null $2k$-mean curvature in Euclidean space $\mathbb R^{n+1}$, $2\leq 2k\leq n-1$.  We then combine a regularity argument with a rigidity result by Ara\'ujo-Leite \cite{araujo2012two}  to prove, under a natural ellipticity condition, a global uniqueness theorem for this class of black holes. As a consequence we obtain,
			in the context of the corresponding Penrose inequality for graphs established by Ge-Wang-Wu \cite{ge2014new}, a local rigidity
			result for the Lovelock-Schwarzschild solutions. 
		\end{abstract}

		\maketitle
		%\tableofcontents

		%%%%%%%%%%%%%%%%%%%%%%%%%%%%%%%%%%%%%%%%%%%%%%%%%%%%%%%%%%%%%%%%%

		\section{Introduction}\label{intro}

		Since the seminal work by Israel \cite{israel1967event}, the study of uniqueness properties of black hole solutions of Einstein field equations remains an active field of research; see \cite{heusler1996black,robinson2009four,chrusciel2012stationary} for recent reviews on this subject. From a physical viewpoint, this body of knowledge captures the accepted perception that ``black holes have no hair'', which roughly means that stationary black hole solutions satisfying suitable boundary conditions (both at the horizon and at spatial infinity) should be completely characterized by a few externally observable conserved charges (mass, angular momentum, etc.) For instance, in the static, asymptotically flat case, a celebrated result by Bunting and Masood-ul-Alam \cite{bunting1987nonexistence} guarantees that the black hole is completely determined by its ADM mass (and hence coincides with a Schwarzschild solution). A remarkable feature of this latter contribution is that its proof uses in an essential way the rigidity statement of the positive mass theorem due to Schoen and Yau \cite{schoen1981energy}. 
		
		In recent years, there has been a renewed interest in studying gravity theories in higher dimension $N=n+1$, $n\geq 4$, possibly coupled to certain kinds of matter fields (scalar, gauge, etc.), as these abstractions potentially qualify as low energy limits of certain versions of superstring theory. It turns out, however, that some of these theories, even if restricted to the Einsteinian setting, display exotic black hole solutions which can not be fully characterized by their conserved charges. In particular, black hole uniqueness, at least as classically envisaged, fails to hold; see \cite{emparan2008black,horowitz2012black,hollands2012black} for discussions of these phenomena. 
		
		The purpose of this note is to indicate that, at least for pure Lovelock gravity, a natural and elegant extension of Einstein's general relativity in higher dimensions, an appropriate version of black hole uniqueness may be restored under suitable assumptions.  As explained in Section \ref{state}, we consider asymptotically flat, time-symmetric vacuum solutions in pure Lovelock gravity whose initial data set carries   
		a compact (but not necessarily connected) inner horizon. These objects appear as extremal configurations in a conjectured Penrose inequality which so far has been checked in rather special cases \cite{ge2014gauss,ge2014new,li2014gauss,de2019gauss}. Notice that these competing solutions are not necessarily stationary, so our result somehow departs from the classical uniqueness theorems discussed above. On the other hand, our method of proof demands that the ``degree $k$'' Lovelock black holes we consider display the essential features of the so-called Lovelock-Schwarzschild solutions, viewed as embedded, rotationally invariant hypersurfaces with null $2k$-mean curvature in Euclidean space $\mathbb R^{n+1}$, $2\leq 2k\leq n-1$ (this explains the qualification ``extrinsic'' in the title). If we further assume that a certain ellipticity condition is satisfied everywhere, then our main result (Theorem \ref{main} below) confirms that the black hole solution is actually {\em congruent} to  a Lovelock-Schwarzschild solution. In particular, it is completely determined by its Gauss-Bonnet-Chern mass, a feature which clearly makes contact with the standard formulation of black hole uniqueness. As a consequence we obtain,
		in the context of the corresponding Penrose inequality for graphs established by Ge-Wang-Wu \cite{ge2014new}, a local rigidity
		result for the Lovelock-Schwarzschild solutions. 
		
		This note is organized as follows. In Section \ref{state} we review the pertinent aspects of pure Lovelock gravity of degree $k$, including the explicit description of the corresponding Lovelock-Schwarzschild black holes as  embedded, rotationally invariant hypersurfaces with null $2k$-mean curvature in Euclidean space $\mathbb R^{n+1}$, $2\leq 2k\leq n-1$. This section also contains the precise statement of our main result (Theorem \ref{main}), whose proof is presented in Section \ref{ref:proof}, with a preliminary discussion of the approach occurring in Section \ref{comm:all}. The main technical ingredient in the proof of Theorem \ref{main} is a remarkable rigidity result by Ara\'ujo and Leite \cite[Theorem 3.1]{araujo2012two}, which extends previous contributions in \cite{de2011two,hounie1999two,schoen1983uniqueness}. This result completely classifies {\em two-ended} extrinsic Lovelock black hole solutions and our reasoning essentially boils down to doubling the given one-ended Lovelock black hole solution across the horizon, which is assumed to lie in a fixed hyperplane $\Pi\hookrightarrow\mathbb R^{n+1}$. It turns out, however, that the so constructed two-ended black hole is in principle only of class $C^{1,1}$ along the horizon. It is precisely at this point that the ellipticity assumption comes into play, as it allows us to explicitly carry out a regularity argument in order to restore $C^2$-smoothness. Finally, Theorem \ref{rig:char} describes the Penrose-type local rigidity result mentioned above.

		\section{Pure Lovelock gravity and statement of the main result}\label{state}
		
		Starting with Einstein's relativity, it has been highly desirable to characterize  feasible gravity theories which have a Lorentzian metric $\overline g$ as its gravitational dynamical field by a few natural properties. According to this axiomatic approach, general covariance should lead to field equations of the type
		\begin{equation}\label{field:eq}
			\mathbf G(\overline g)_{ij}=\mathbf T_{ij},
		\end{equation}
		where the left-hand side is a twice covariant symmetric tensor whose (local) dependence on $\overline g$ must involve, in the simplest case we will be interested in, derivatives up to second order, whereas the right-hand side $\mathbf T$ collects the non-gravitat\-ional fields in the theory. Moreover, the usual (local) conservation law imposes a diverg\-ence-free condition on $\mathbf G$:
		\begin{equation}\label{div:cond}
			\nabla_{\overline g}^i\mathbf G(\overline g)_{ij}=0,
		\end{equation}
		where $\nabla_{\overline g}$ is the Levi-Civita connection associated to $\overline g$. A celebrated result by Lovelock \cite{lovelock1972four} confirms that in the physical dimension $3+1$, these conditions uniquely determine $\mathbf G(\overline g)$ as the well-known Einstein tensor plus a constant multiple of the metric:
		\begin{equation}\label{einstein:tensor}
			\mathbf G(\overline g)={\rm Ric}_{\overline g}-\frac{S_{\overline g}}{2}\overline g+\Lambda \overline g,
		\end{equation}
		where ${\rm Ric}_{\overline g}$ is the Ricci tensor of $\overline g$ and $S_{\overline g}={\rm tr}_{\overline g}{\rm Ric}_{\overline g}$ is the scalar curvature. Here and in the following we set the cosmological constant $\Lambda$ to vanish.
		
		However, in higher dimension $N=n+1$, $n\geq 4$, new phenomena emerge and this characterization of the Einstein tensor only remains true if we further assume that $\mathbf G$ depends {\em linearly} on the second derivatives $\partial^2\overline g$ of $\overline g$. More generally, symmetric tensors satisfying (\ref{div:cond})
		and with an arbitrary dependence on $\partial^2\overline g$ have been classified in \cite{lovelock1971einstein}; see \cite{navarro2011lovelock} for a modern account of this foundational result. It then follows that if we further assume that the dependence on $\partial^2\overline g$ is homogeneous of degree $k$ then $\mathbf G(\overline g)\approx \mathbf L_{2k}(\overline g)$, where the  {\em pure Lovelock tensor} ${\mathbf L_{2k}(\overline g)}$ is given by 
		\begin{equation}\label{lovel:local}
			{\mathbf L_{2k}(\overline g)}_{ij}=\overline g_{il}\delta^{li_1i_2\ldots i_{2k-1}i_{2k}}_{jj_1j_2\ldots j_{2k-1}j_{2k}}\overline R_{i_1i_2}^{j_1j_2}\ldots \overline R_{i_{2k-1}i_{2k}}^{j_{2k-1}j_{2k}}, \quad 1\leq k\leq \frac{n}{2}. 
		\end{equation}
		Here, $\overline R^{ij}_{kl}$ are the coefficients of the Riemann curvature tensor of $\overline g$ with respect to a local orthonormal basis of tangent vectors and the symbol $\approx$ relates quantities that possibly differ by a universal multiplicative constant. 
		
		\subsection{Pure Lovelock gravity}\label{pure:love:g}
		The field equations for pure Lovelock gravity {\em in vacuum}, namely,
		\begin{equation}\label{field:love:v}
			{\mathbf L_{2k}(\overline g)}=0,
		\end{equation}	
		arise from a variational principle involving a geometric Lagrangian density, the so-called  {\em Gauss-Bonnet curvature}
		\begin{equation}\label{gauss:bonnet}
			{\bf S}_{2k}(\overline g)\approx {\rm tr}_{\overline g} {\mathbf L_{2k}(\overline g)} \approx \delta^{i_1i_2\ldots i_{2k-1}i_{2k}}_{j_1j_2\ldots j_{2k-1}j_{2k}}\overline R_{i_1i_2}^{j_1j_2}\ldots \overline R_{i_{2k-1}i_{2k}}^{j_{2k-1}j_{2k}}.
		\end{equation}
		Thus, a metric $\overline g$ extremizes the  
		{Lovelock action}
		$$
		\overline g\longmapsto \int {\bf S}_{2k}(\overline g)
		$$
		if and only if (\ref{field:love:v}) is satisfied \cite{lovelock1989tensors,labbi2008variational}. 
		Notice that ${\bf S}_2(\overline g)$ is the full contraction of $\overline R$,  hence a multiple of $S_{\overline g}$. 
		Also, similarly to the Einstein tensor ($k=1$), the Lovelock tensor decomposes as 
		\[
		{{\bf L}}_{2k}(\overline g)\approx {{\bf Ric}}_{2k}(\overline g)-\frac{{\bf H}_{2k}(\overline g)}{2}\overline g,
		\]
		where ${{\bf Ric}}_{2k}(\overline g)$ is the so-called $2k$-{\em Ricci tensor}. In the language of double forms, one has ${{\bf Ric}}_{2k}(\overline g)= c_{\overline g}^{2k-1}\overline R^k$ and ${{\bf H}}_{2k}(\overline g)= c_{\overline g}^{2k}\overline R^k\approx {\bf S}_{2k}(\overline g)$, where $\overline R$ is the Riemann tensor of $\overline g$ and $c_{\overline g}$ is the contraction induced by $\overline g$ \cite[Definition 2.2]{labbi2008variational}.
		In particular, ${{\bf Ric}}_{2}(\overline g)={\rm Ric}_{\overline g}$.
		In this way we obtain a full-fledged gravitational theory (pure Lovelock gravity of degree $k$) which happens to be a natural generalization of Einstein's relativity {\em in vacuum} (this corresponds to $k=1$).

		\begin{remark}\label{dadhich}
		{\rm 	Another compelling indication that pure Lovelock is the right setup for gravity in higher dimensions comes from the writings of N. Dadhich and collaborators, where it is argued that this theory shares many of the congenial features of Einstein's relativity, including kinematicity \cite{dadhich2016distinguishing}, the existence of bound orbits around static subjects \cite{dadhich2013bound} and thermodynamical universality \cite{dadhich2012thermodynamical}; see \cite{dadhich2016distinguishing} and the references therein for more on these issues.}   
		\end{remark}

		\begin{remark}\label{intr:formul}
			{\rm It is useful to provide a coordinate free formulation of Lovelock theory. If $(\overline M^N,\overline g)$ is the underlying Lorentzian manifold, $N=n+1$, 
				recall that 
				the metric $\overline g$ induces a natural bundle isomorphism $T\overline M\equiv T^*\overline M$. Given a vector field $Z\in \Gamma(T\overline M)$ and a (local) volume element $\Omega$ one has, by Cartan's magic formula,
				$$
				{\rm d} {\bf i}_Z\Omega={\rm d} {\bf i}_Z\Omega+ {\bf i}_Z{\rm d}\Omega=\Lc_Z\Omega=({\rm div}_{\overline g}\,Z)\Omega,
				$$
				where ${\bf i}_Z$ is contraction with $Z$, $\Lc_Z$ is Lie derivative and ${\rm div}_{\overline g}$ is the divergence. Thus, the correspondence $Z\leftrightarrow \omega=i_Z\Omega$ gives rise to an isomorphism between $\Gamma(T^*\overline M)\equiv \Gamma(T\overline M)$ and $\Gamma(\Lambda^{n-1}T^*\overline M)$ such that ${\rm div}_{\overline g} Z=0$ if and only if ${\rm d}\omega=0$. Similarly, the correspondence
				$
				Z_1\otimes Z_2 \leftrightarrow {\bf i}_{Z_1}\Omega\otimes {\bf i}_{Z_2}\Omega
				$
				defines an isomorphism between $\Gamma(\otimes ^2T^*\overline M)$, the space of twice covariant tensors, and $\Gamma(\Lambda^{n}T^*\overline M\otimes \Lambda^{n}T^*\overline M)$, the space of $n$-forms with values in $n$-forms, which is well defined globally even if $\overline M$ is not orientable. Also, restriction to $\Sc^2(T^*\overline M)\subset \Gamma(\otimes ^2T^*\overline M)$, the space of symmetric $(2,0)$-tensors, defines an isomorphism between $\Sc^2(T^*\overline M)$ and $\Sc^2(\Lambda^{n}T^*\overline M)$, where, by definition, $\Sc^2(\Lambda^{p}T^*\overline M)$ is formed by those  $\eta\in\Gamma(\Lambda^{p}T^*\overline M\otimes \Lambda^{p}T^*\overline M)$ which are  symmetric in the sense that 
				\begin{equation}\label{def:symm}
					\eta(v_1\wedge\ldots\wedge v_{p}\otimes w_1\wedge\ldots\wedge
					w_{p})=\eta(w_1\wedge\ldots\wedge w_{p}\otimes v_1\wedge\ldots\wedge
					v_{p}).
				\end{equation}
				Also, a computation shows that $T\in \Sc^2(T^*\overline M)$ satisfies ${\rm div}_{\overline g}T=0$ if and only the corresponding $\eta\in \Sc^2(\Lambda^{n}T^*\overline M)$ satisfies ${\rm d}^{\nabla_{\overline g}}\eta=0$, where, as an operator acting on (\ref{def:symm}), ${\rm d}^{\nabla_{\overline g}}$ is the covariant exterior derivative defined by using the standard exterior derivative in the first factor and the covariant derivative $\nabla_{\overline g}$ induced by the Levi-Civita connection in the second factor. Now, $\overline g\in \Sc^2(\Lambda^{1}T^*\overline M)$ and $\overline R\in \Sc^2(\Lambda^{2}T^*\overline M)$, so we may  define
				\begin{equation}\label{lov}
					\widetilde{\mathbf  L}_{2k}(\overline g)= \underbrace{\overline R{\wedge}{\ldots} \wedge \overline R}_k\wedge \underbrace{\overline g\wedge \ldots \wedge \overline g}_{n-2k}\in \Sc^2(\Lambda^{n}T^*\overline M),\quad 1\leq k\leq \frac{n}{2}.
				\end{equation}
				Since ${\rm d}^{\nabla_{\overline g}}\overline g =0$ (metric compatibility) and ${\rm d}^{\nabla_{\overline g}} \overline R=0$ (Bianchi identity), we get ${\rm d}^{\nabla_{\overline g}} \widetilde{\mathbf  L}_{2k}(\overline g)=0$. Thus, the corresponding $(2,0)$-tensor $\widehat{\mathbf L}_{2k}(\overline g)\in S^2(T^*\overline M)$ satisfies ${\rm div}_{\overline g}\widehat{\mathbf L}_{2k}(\overline g)=0$. By Lovelock's theorem mentioned earlier, $\widehat{\mathbf L}_{2k}(\overline g)$ is a (universal) multiple of the  Lovelock tensor $\mathbf L_{2k}(\overline g)$.}
		\end{remark}

		\subsection{Lovelock-Schwarzschild black holes}\label{love:Sch}
		It turns out that the analogy of (pure) Lovelock gravity with Einstein relativity goes one step further, the reason being that the field equations (\ref{field:love:v})
		admit a one-parameter family of static, spherically symmetric black hole solutions \cite{crisostomo2000black,cai2006black,kastikainen2019quasi}. More precisely, in standard coordinates $(\mathsf t,r,\theta)$, $\theta\in\mathbb S^{n-1}$, the solution is given by
		\begin{equation}\label{black:hole}
			\overline g_{k,m}=-V_{k,m}(r)d\mathsf t^2+g_{k,m}, \quad g_{k,m}=\frac{dr^2}{V_{k,m}(r)}+r^2d\theta^2, \quad V_{k,m}(r)=1-\frac{2m}{r^{\frac{n}{k}-2}}, 
		\end{equation}
		where $m>0$ is a real parameter whose physical interpretation we discuss later on; see the discussion surrounding (\ref{egm:mass:for}). 
		Since the space-like slice $\mathsf t=0$ is totally geodesic, it follows that  the Riemannian metric $g_{k,m}$ satisfies the curvature condition
		\begin{equation}\label{slike:curv}
			{\bf S}_{2k}(g_{k,m})=0.
		\end{equation}
		Conversely, at least in the time-symmetric case, this curvature condition may be viewed as the constraint equation   
		a space-like initial data set should satisfy in order to have its Cauchy development yielding a solution of (\ref{field:love:v}) \cite{choquet1988cauchy,teitelboim1987dimensionally}.

		The black hole character of the metric $\overline g_{k,m}$ in (\ref{black:hole})  manifests itself in the fact that it displays an event horizon located at $r_{k,m}=(2m)^{{k}/{(n-2k)}}$. As in the classical situation, this suggests that the solution may be continued somehow beyond the horizon. Clearly, it suffices to perform this continuation for the space-like metric $g_{k,m}$ and this may be accomplished in  two rather distinct, but related, ways. First, we may work intrinsically and verify that 
		\begin{equation}\label{conf:rep}
			g_{k,m}=\left(1+\frac{m}{2\rho^{\frac{n}{k}-2}}\right)^{\frac{4k}{n-2k}}\left(d\rho^2+\rho^2d\theta^2\right),\quad \rho>0,
		\end{equation}
		which provides a conformal representation of the black hole metric with the horizon now located at $\rho_{k,m}=(m/2)^{k/(n-2k)}$. The corresponding lapse is 
		\[
		l(\rho)=\left(\frac{1-\frac{m}{2\rho^{\frac{n}{k}-2}}}{1+\frac{m}{2\rho^{\frac{n}{k}-2}}}\right)^2,
		\]
		which confirms the null character of the event horizon $\rho=\rho_{k,m}$. 
		We refer to the pair $(M_{k,m},g_{k,m})$, where $M_{k,m}:=(r_{k,m},+\infty)\times \mathbb S^{n-1}$, as a one-ended {\em Lovelock-Schwarzschild black hole} (associated to the pair $(k,m)$). When $k=1$ we recover the Schwarzschild solution of Einstein gravity.

		\subsection{Extrinsic Lovelock-Schwarzschild black holes}\label{ext:bh}
		For our purposes, it is convenient to work {\em extrinsically} and realize the  Lovelock-Schwarzschild black holes mentioned earlier by means of an isometric embedding $(M_{k,m},g_{k,m})\hookrightarrow (\mathbb R^{n+1},\overline \delta)$, where $\overline \delta$ is the flat metric. The key observation is that the passage from (\ref{lovel:local}) to (\ref{gauss:bonnet}), when adapted to a general Riemannian $n$-manifold $(M,g)$, implies that ${\bf S}_{2k}(g)={\rm tr}_g\mathbf L_{2k}(g)$. If we further assume that $(M,g)\hookrightarrow (\mathbb R^{n+1},\overline \delta)$ isometrically then by Gauss equation the curvature tensor $R$ of $g$ satisfies $R=\frac{1}{2}A\wedge A$, where  $A\in \Sc^2(\Lambda^1T^*M)$ is the shape operator of the embedding. In this Riemannian setting, (\ref{lov})  becomes
		$$
		\widetilde{\mathbf  L}_{2k}(g)= \frac{1}{2^k}\underbrace{A{\wedge} {\ldots} \wedge A}_{2k}\wedge \underbrace{g\wedge \ldots \wedge g}_{n-1-2k}, \quad 1\leq k\leq\frac{n-1}{2},
		$$  
		and a computation shows that ${\bf S}_{2k}(g)\approx{\bm \sigma}_{2k}(A)$, so that the time-symmetric constraint  (\ref{slike:curv}) becomes
		\begin{equation}\label{slike:curv2}
			{\bm \sigma}_{2k}(A)=0.
		\end{equation}
		Here, ${\bm \sigma}_p(A)={\bm \sigma}_p(\kappa_1,\ldots,\kappa_n)$, the $p$-{\em mean curvature} of $M$, is the elementary symmetric function of degree $p$ in the eigenvalues $\{\kappa_i\}$ of $A$ (the principal curvatures). This discussion suggests that each $g_{k,m}$ should be realized as the induced metric of some {\em rotationally invariant} isometric embedding of $M_{k,m}$ satisfying (\ref{slike:curv2}). We  now proceed to indicate how these invariant embeddings may be classified; compare with \cite{leite1990rotational}.   
		
		Consider the orthogonal group $O_n$ acting  isometrically on  $(\mathbb R^{n+1},\overline\delta)$ and leaving invariant the axis $x_{n+1}=t$. If $x_1=s$ is another coordinate axis orthogonal to $x_{n+1}$ at the origin then the half-plane $P=\{(s,t); s\geq 0\}$ is the orbit space of the $O_n$-action. If $\Sigma^n\hookrightarrow\mathbb R^{n+1}$ is {\em rotational}, i.e. $O_n$-invariant, then we denote by $\alpha\subset P$ its profile curve. We assume that $\alpha=\alpha(\tau)$ is parametrized by arc length: 
		\begin{equation}\label{unit}
			\dot s^2+ \dot t^2=1,
		\end{equation}
		where the dot stands for derivation with respect to $\tau$.
		If $\theta=(\theta_1,\cdots,\theta_{n-1})$ are standard coordinates in the spherical orbit then the induced metric on $\Sigma$ is
		\[
		g=d\tau^2+s(\tau)^2d\theta^2.
		\]
		From this we easily compute the sectional curvatures of $g$:
		\[
		K(\partial_{\theta_i},\partial_{\theta_j})=\frac{1-\dot s^2}{s^2},\quad K(\partial_{\theta_i},\partial_\tau)=-\frac{\ddot s}{s},
		\]
		which by Gauss equation give the principal curvatures of $\Sigma$:
		\[
		\kappa_1=\cdots=\kappa_{n-1}=\frac{\sqrt{1-\dot s^2}}{s}, \quad \kappa_n=-\frac{\ddot s}{\sqrt{1-\dot s^2}}.
		\]
		Thus, 
		\[
		{\bm \sigma}_p(A)=
		\left(
		\begin{array}{c}
			n-1\\
			p
		\end{array}
		\right)
		\left(
		\frac{\sqrt{1-\dot s^2}}{s}
		\right)^p-\left(
		\begin{array}{c}
			n-1\\
			p-1
		\end{array}
		\right)\frac{\ddot s}{\sqrt{1-\dot s^2}}\left(
		\frac{\sqrt{1-\dot s^2}}{s}
		\right)^{p-1},
		\]
		so that ${\bm \sigma}_{2k}(A)=0$, $2\leq 2k\leq n-1$, if and only if
		\begin{equation}\label{sigmap}
			(n-2k)(1-\dot s^2)-2k s\ddot s=0.
		\end{equation}
		
		We proceed by observing that the quantity 
		\[
		C(s,\dot s)=s^{\frac{n}{k}-2}(1-\dot s^2)
		\]
		satisfies $\dot C=0$, so $C$ is a first integral for (\ref{sigmap}).
		Thus, we are left with the task of classifying profile curves $\alpha=(s,t)$ satisfying
		\begin{equation}\label{sigma2k}
			s^{\frac{n}{k}-2}(1-\dot s^2)=2c, \quad c>0.
		\end{equation}
		By eliminating $\tau$ from (\ref{sigma2k})
		and (\ref{unit})
		we see that $t=t(s)$ satisfies
		\begin{equation}\label{fin}
			\left(\frac{dt}{ds}\right)^2=
			\frac{2c}{{s^{\frac{n}{k}-2}}-2c}, \quad s> (2c)^{\frac{k}{n-2k}}.
		\end{equation}
		This shows that $\Sigma$ is the union of two vertical graphs $\Sigma^\pm$ associated to functions $t=t^{\pm}$ satisfying (\ref{fin}). Since $|(dt^{\pm}/ds)((2c)^{k/(n-2k)})|=+\infty$, these graphs meet together along the hyperplane $\mathbb R^n=\{x_{n+1}=0\}$ and they both intersect the hyperplane orthogonally.
		Notice that $\Sigma$ is smooth along the common horizon because (\ref{fin}) easily implies that \begin{equation}\label{com:sm}
			\frac{d^rs}{d(t^\pm)^r}\approx s^{\frac{n}{k}-1-r},\quad  r\geq 2,
		\end{equation}
		which remains finite and converges to a common value as $t^\pm\to 0$. 
		In these coordinates, the induced metric on $\Sigma=\Sigma^+\cup\Sigma^-$ is 
		\begin{equation}\label{asymp:beh}
			g_{k,c}=\frac{ds^2}{ V_{k,c}(s)}+s^2d\theta^2, \quad  V_{k,c}(s)={{1-\frac{2c}{s^{\frac{n}{k}-2}}}}.
		\end{equation}
		Thus, if we set $s=r$ and $c=m$ we see that $(\Sigma^+,g_{k,m})$ recovers the one-ended Lovelock-Schwarzschild black hole $(M_{k,m},g_{k,m})$ as the induced geometry on the corresponding $O_n$-invariant graph 
		satisfying (\ref{slike:curv2}). Also, $(\Sigma,g_{k,m})$ reproduces the two-ended conformal representation in (\ref{conf:rep}) with the horizon being located at the intersection $\Sigma\cap\mathbb R^n$.
		Notice that from (\ref{asymp:beh}) we easily check that, as $s\to+\infty$,  
		\begin{equation}\label{asymp:beh2}
			(g_{k,m})_{ij}=\left(1 +2m s^{-\frac{n}{k}+2}\right)\delta_{ij}+O(s^{-\frac{n}{k}+1}),	
		\end{equation}
		where $\delta$ is the flat metric in $\mathbb R^n$.
		
		\subsection{Extrinsic Lovelock black holes and the main theorem}\label{ext:main}
		After this preliminary discussion, we finally describe the class of {\em extrinsic} black holes we are interested in, which appear prominently in our main result (Theorem \ref{main} below). The idea is to consider a one-ended abstract black hole $(M,g)$ which can be isometrically embedded in $(\mathbb R^{n+1},\overline\delta)$ in such a way that it shares the essential features of the extrinsic model $(\Sigma^+,g_{k,m})$
		described above. More precisely, we require the following from $(M,g)$: 
		\begin{enumerate}[I.]
			\item from the abstract viewpoint, the $n$-manifold $M$ carries a (a not necessarily connected) compact inner boundary $\Gamma$ and a unique end, say $\mathcal E$, which is diffeomorphic to the complement of a ball in $\mathbb R^n$; 
			\item under the isometric embedding $(M,g)\hookrightarrow (\mathbb R^{n+1},\overline\delta)$, $M$ lies entirely inside the half-space $x_{n+1}\geq 0$, $\Gamma$ lies in $\mathbb R^n$, and  $M\cap \mathbb R^n=\Gamma$, with the intersection being orthogonal along $\Gamma$; 
			\item besides ${\bm \sigma}_{2k}(A)=0$, we also assume that ${\bm \sigma}_{2k+1}(A)\neq 0$ everywhere along $M$, where $A$ is the shape operator of the embedding; 
			\item the end $\mathcal E$ may be written as the graph of a smooth function $u:\mathbb R^n\backslash \{|x|\leq R\}\to \mathbb R$, $R$ large. Moreover, as $|x|\to +\infty$, $u$ displays the {\em same} asymptotic behavior as $t^+$ defined by (\ref{fin}) under the identification $s=|x|$.
		\end{enumerate}  
		
		Notice that (II) implies that $\Gamma\hookrightarrow M$ is totally geodesic and hence qualifies to be a horizon. In (III), besides the constraint equation ${\bm \sigma}_{2k}(A)=0$ (compare with (\ref{slike:curv}) and (\ref{slike:curv2})) we also require the ``ellipticity condition'' ${\bm \sigma}_{2k+1}(A)\neq 0$; see Section  \ref{comm:all} below for more on this point. We emphasize that these conditions are met by our model $(M_{k,m},g_{k,m})$ viewed as an $O_n$-invariant hypersurface in $\mathbb R^{n+1}$. 
		
		We now elaborate on the prescribed asymptotic behavior of $u$ as  indicated in (IV).  For this it is convenient to set 
		\[
		q=q_{k,n}=\frac{n}{2k}-1 \Longrightarrow \frac{1}{2k}\leq q\leq \frac{n}{2}-1.
		\] 
		We then consider the following asymptotic regimes mentioned in (IV), which can be read off from the corresponding asymptotic expansion of $t$ in (\ref{fin}); compare with Definitions 3.1 and 4.2 in \cite{araujo2012two}.
		\begin{itemize}
			\item if $q>1$ then
			\begin{equation}\label{cond1}
				u(x)=a_1-\frac{a}{q-1}|x|^{1-q}+\langle c,x\rangle|x|^{-q-1}+O(|x|^{-q-1});
			\end{equation}
			\item if $q=1$ then  
			\begin{equation}\label{cond2}
				u(x)=a_1+a\log|x|+\langle c,x\rangle|x|^{-2} +O(|x|^{-2});
			\end{equation}
			\item if $(2k)^{-1}\leq q<1$ we decompose
			\begin{eqnarray*}
				\left[\frac{1}{2k},1\right) 
				& = & \left\{\frac{1}{2k}\right\}\cup \left(\frac{1}{2k},\frac{1}{2k-1}\right)\cup\left\{\frac{1}{2k-1}\right\}\cup
				\left(\frac{1}{2k-1},\frac{1}{2k-2}\right]\cup \cdots\\
				& & \quad \cdots \cup \left\{\frac{1}{3}\right\}\cup \left(\frac{1}{3},\frac{1}{2}\right]\cup \left(\frac{1}{2},1\right).
			\end{eqnarray*}
			To match the notation in \cite[Section 4]{araujo2012two}, we label the intervals in this decomposition as 
			\[
			I_m^1=\left(\frac{1}{2m+2},\frac{1}{2m+1}\right), \quad I_m^2=\left(\frac{1}{2m+3},\frac{1}{2m+2}\right], \quad 0\leq m\leq k-1,
			\]
			whereas the points  are labeled by $q_{m}=(2m+3)^{-1}$, $0\leq m\leq k-3/2$. With this notation, the asymptotics of $u$ in this range of $q$ may be prescribed as follows.
			\begin{enumerate}
				\item if $q\in I^1_m$ then
				\begin{equation}\label{cond3}
					u(x)=a_1+P_m(a,|x|)+\langle c ,x\rangle|x|^{-1-q}+O(|x|^{1-(2m+3)q});
				\end{equation}
				\item if $q\in I^2_m$ then
				\begin{eqnarray}\label{cond4}
					u(x) & = & a_1+P_m(a,|x|)+a_2|x|^{1-(2m+3)q}\nonumber \\ 
					& & \quad + \langle c ,x\rangle|x|^{-1-q}+O(|x|^{1-(2m+5)q});
				\end{eqnarray}
				\item if $q=q_m$ then
				\begin{eqnarray}\label{cond5}
					u(x) & = & a_1+P_m(a,|x|)+B_{m+1}a^{1/q}\log |x|\nonumber \\
					& & \quad + \langle c ,x\rangle|x|^{-1-q}+O(|x|^{-2q}).
				\end{eqnarray}
			\end{enumerate}
			Here, for $(2j+1)q<1$ we set 
			\[
			P_j(a,|x|)=C_0a|x|^{1-q}+C_1a^3|x|^{1-3q}+\cdots + C_ja^{2j+1}|x|^{1-(2j+1)q},
			\]
			where
			\[
			C_j=\frac{B_j}{1-(2j+1)q}, \quad 
			B_j=\frac{1\times 3\times\cdots\times (2j-1)}{2^j\times j!}. 
			\]
			Let us agree that, in these expansions, $a>0$, $a_1,a_2\in\mathbb R$ and $c\in\mathbb R^n$. Notice also that $C_0=(1-q)^{-1}$.
		\end{itemize}
		
		An important point in these expansions is that an examination of the (non-constant) leading term, which equals $a(1-q)^{-1}|x|^{1-q}$ for $q\neq 1$ and $a\log |x|$ for $q=1$,  shows that the induced metric is given by
		\begin{eqnarray}\label{cgb:asymp}
			g_{ij} & = & \delta_{ij}+\frac{\partial u}{\partial x_i}\frac{\partial u}{\partial x_j}\nonumber\\
			& = &  \delta_{ij}+a^2|x|^{-2q-2}x_ix_j+O(|x|^{-2q-1})\nonumber \\
			& = & \delta_{ij}+a^2 O(|x|^{-2q}),
		\end{eqnarray}
		which matches with (\ref{asymp:beh2}).  
		In fact, a comparison of these expressions shows that 
		\begin{equation}\label{a:m}
			a^2=2m.
		\end{equation}

		\begin{definition}\label{ext:bh:def}
			We say that a pair $(M,g)$ is an {\em extrinsic Lovelock black hole} if there exists an isometric embedding $(M,g)\hookrightarrow(\mathbb R^{n+1}, \overline \delta)$ satisfying  conditions (I)-(IV) above (for some $k$ such that $2\leq 2k\leq n-1$).  
		\end{definition}
		
		Thus, by (\ref{a:m}) an extrinsic Lovelock black hole has the same asymptotic expansion as the model Lovelock-Schwarzschild black hole (at least up to first order). 
		In any case, 
		with this definition at hand we may finally state our extrinsic black hole uniqueness theorem in pure Lovelock gravity. 
		
		\begin{theorem}\label{main}
			Let $(M,g)\hookrightarrow(\mathbb R^{n+1},\overline \delta)$ be an extrinsic Lovelock black hole as in Definition \ref{ext:bh:def} above (for some $k$). Then $(M,g)$ is congruent to the Lovelock-Schwarzschild black hole $(M_{k,m},g_{k,m})$ viewed as an $O_n$-invariant hypersurface in $\mathbb R^{n+1}$ (for some $m>0$).
		\end{theorem}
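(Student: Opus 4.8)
The plan is to reduce the one-ended statement to the two-ended rigidity theorem of Ara\'ujo--Leite \cite[Theorem 3.1]{araujo2012two} by \emph{doubling} $(M,g)$ across its horizon. Write $\Pi=\{x_{n+1}=0\}$ and let $\iota\colon\mathbb R^{n+1}\to\mathbb R^{n+1}$ be the reflection $\iota(x_1,\dots,x_n,x_{n+1})=(x_1,\dots,x_n,-x_{n+1})$. By condition (II) the hypersurface $M$ lies in $\{x_{n+1}\ge 0\}$ and meets $\Pi$ orthogonally along $\Gamma=M\cap\Pi$, so the union $\widehat M:=M\cup\iota(M)$ is a complete hypersurface with two ends, namely $\mathcal E$ and its mirror image. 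Since $\iota$ is an isometry of $(\mathbb R^{n+1},\overline\delta)$ it preserves the principal curvatures as an unordered set, and because ${\pmb\sigma}_{2k}$ has even degree the constraint (\ref{slike:curv2}) is invariant under $\iota$; hence $\iota(M)$ again satisfies ${\pmb\sigma}_{2k}(A)=0$, and, the degree $2k+1$ being odd, ${\pmb\sigma}_{2k+1}(A)$ merely changes sign while remaining nonzero, which is all that matters. Thus $\widehat M$ is a classical two-ended solution of the constraint away from $\Gamma$, and the orthogonality in (II) ensures that the two sheets fit together to first order across $\Gamma$, so that $\widehat M$ is at least $C^1$ there.

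The delicate point is the regularity of $\widehat M$ along $\Gamma$. Representing $M$ near a point of $\Gamma$ as a graph $x_n=F(x',x_{n+1})$ over the hyperplane spanned by $\partial_{x_{n+1}}$ and $T\Gamma$ (legitimate, since the normal is horizontal there), orthogonality reads $\partial_{x_{n+1}}F=0$ on $\{x_{n+1}=0\}$, and the doubled sheet is the even reflection $F(x',|x_{n+1}|)$; a direct inspection shows this gluing is, a priori, only of class $C^{1,1}$, and one must work harder to obtain the smoothness demanded by the classification. This is where the ellipticity condition ${\pmb\sigma}_{2k+1}(A)\ne0$ of (III) enters. On the zero set $\{{\pmb\sigma}_{2k}=0\}$ this nonvanishing places the vector of principal curvatures in the region where the derivatives $\partial{\pmb\sigma}_{2k}/\partial\kappa_i$ are all of one sign and bounded away from zero (by the Newton--Maclaurin inequalities and the G\aa rding-cone structure), so that ${\pmb\sigma}_{2k}(A)=0$ is a uniformly elliptic fully nonlinear equation for the local graph of $\widehat M$. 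The $C^{1,1}$ solution produced by the doubling is then a viscosity (equivalently weak) solution of this equation across $\Gamma$, and the Evans--Krylov and Schauder theories promote it to $C^{2,\alpha}$ and, by bootstrapping, to $C^\infty$. Carrying out this regularity step---chiefly extracting uniform ellipticity along $\Gamma$ from ${\pmb\sigma}_{2k+1}(A)\ne0$ and verifying the viscosity formulation across the gluing---is the main obstacle.

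With $\widehat M$ now a smooth, complete, two-ended hypersurface of $\mathbb R^{n+1}$ satisfying ${\pmb\sigma}_{2k}(A)=0$ and ${\pmb\sigma}_{2k+1}(A)\ne0$, and whose two ends exhibit the prescribed asymptotics (\ref{cond1})--(\ref{cond5}) (condition (IV) for $\mathcal E$, and its $\iota$-image for the mirror end), the rigidity theorem of Ara\'ujo--Leite \cite[Theorem 3.1]{araujo2012two} applies and forces $\widehat M$ to be rotationally invariant, i.e.\ congruent to the $O_n$-invariant extrinsic model $\Sigma=\Sigma^+\cup\Sigma^-$ built from (\ref{fin}). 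Restricting to $\{x_{n+1}\ge 0\}$ then identifies $M$ with $\Sigma^+$, hence with the Lovelock--Schwarzschild black hole $(M_{k,m},g_{k,m})$, the parameter $m$ being fixed by the leading asymptotic coefficient through $a^2=2m$ as in (\ref{a:m}). This yields the asserted congruence.
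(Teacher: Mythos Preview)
Your overall architecture matches the paper's exactly: reflect across $\Pi$ to produce a two-ended hypersurface, upgrade the $C^{1,1}$ regularity along $\Gamma$ to $C^2$, and then invoke Ara\'ujo--Leite. The only substantive difference is in how the regularity step is executed.

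The paper does \emph{not} appeal to viscosity solutions or Evans--Krylov. Instead it gives a short, purely algebraic argument: writing the doubled hypersurface locally as a graph $v$ over $T_qM'$ with $\Gamma$ corresponding to $\{y_n=0\}$, one already knows that all second derivatives $v^{\pm}_{ij}$ with $(i,j)\neq(n,n)$ agree along $\Gamma_0$. The equation ${\pmb\sigma}_{2k}(v^\pm)=0$ is then decomposed as $D^\pm v^\pm_{nn}+(\text{terms not involving }v_{nn})=0$, where $D^\pm$ is exactly the $(n,n)$-entry of the linearization $\partial{\pmb\sigma}_{2k}/\partial v_{ij}$ at the origin; ellipticity (i.e.\ definiteness of this matrix) forces $D^\pm(0)\neq 0$, so one can solve for $v^\pm_{nn}$ in terms of quantities already known to match on the two sides. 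This yields $v^+_{nn}=v^-_{nn}$ directly.

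Your route via Evans--Krylov is heavier and, as written, has a gap you yourself flag as the ``main obstacle'': beyond uniform ellipticity, Evans--Krylov requires concavity (or convexity) of the operator in the Hessian, and the level set $\{{\pmb\sigma}_{2k}=0\}$ with ${\pmb\sigma}_{2k+1}\neq 0$ is not obviously inside a G\aa rding cone where the standard concavity of $\sigma_{2k}^{1/2k}$ is available; nor do you verify that the even reflection is a viscosity solution across $\Gamma$. These points can likely be handled, but the paper's one-page implicit-function-type argument sidesteps them entirely and is worth knowing.
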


		\section{Preliminary remarks on the proof of Theorem \ref{main}}\label{comm:all}
		
		The key technical ingredient in the proof of Theorem \ref{main} is a remarkable result by Ara\'ujo and Leite \cite[Theorem 3.1]{araujo2012two}, which extends previous contributions in \cite{de2011two,hounie1999two,schoen1983uniqueness}. More precisely, they classify complete, embedded two-ended hypersurfaces  $M'\hookrightarrow\mathbb R^{n+1}$ such that:
		\begin{enumerate}[a.]
			\item its $2k$-mean curvature vanishes (${\bm \sigma}_{2k}(A)=0$);
			\item it is elliptic in the sense that ${\bm \sigma}_{2k+1}(A)\neq 0$ everywhere;
			\item each of its ends is asymptotically rotationally symmetric (they can be be written as a graph over the complement of a ball in a hyperplane $\Pi\hookrightarrow\mathbb R^{n+1}$ associated to a function, say $u$, which behaves precisely as in (\ref{cond1})-(\ref{cond5}), depending on the value of $q=q_{k,n}$). Ends with this property are termed {\em regular} in \cite{araujo2012two}.
		\end{enumerate} 
		
		We now elaborate a bit on these assumptions. We first note that the null $2k$-mean curvature condition in (a) is the Euler-Lagrange equation associated to   
		a natural variational problem, namely, that related to the functional $\int\sigma_{2k-1}(A)$ \cite{reilly1973variational}.  In this variational setting, the corresponding Jacobi operator is given by
		$$
		J_k={\rm div}_g(N_{2k-1}(A)\nabla\cdot)-(2k+1){\bm \sigma}_{2k+1}(A),
		$$
		where
		\[
		N_p(A)={\bm \sigma}_p(A)I-{\bm \sigma}_{p-1}(A)A+\cdots +(-1)^pA^p
		\]
		is the so-called Newton tensor. 
		It turns out that the ellipticity condition ${\bm \sigma}_{2k+1}(A)\neq 0$ in (b) is equivalent to $N_{2k-1}(A)$ being positive or negative definite, which means of course that  $J_k$ is  elliptic as a differential operator. Ellipticity and embededness are key assumptions in \cite{araujo2012two} as their argument relies heavily on Aleksandrov Reflection Principle. As for (c), we first note that   
		the asymptotics (\ref{cgb:asymp}) guarantees that each regular end as above has a so-called {\em Gauss-Bonnet-Chern mass} $\mathfrak m_k(g)$ attached to it \cite{ge2014new,ge2014gauss}. A direct computation shows that 
		\begin{equation}\label{egm:mass:for}
			\mathfrak m_k(g)\approx a^{2k}\approx m^k.
		\end{equation}
		From our extrinsic viewpoint, this invariant may be accessed as follows. For each end $\mathcal E$ and for each smooth cycle $C\subset \mathcal E$ generating $H_{n-1}(\mathcal E)$ we may consider the flux quantity 
		\[
		{\rm Flux}(\mathcal E)=\int_{C}\langle N_{2k-1}(A)\xi,\nu\rangle d{\rm vol}_C, 
		\] 
		where $\nu$ is the outward unit normal vector field to $C\hookrightarrow \mathcal E$ and $\xi$ is the unit normal vector field to the hyperplane $\Pi$ over which $\mathcal E$ is graphically represented as in (c). We now recall that any coordinate function $x_i$, when restricted to $M'$, satisfies 
		\[
		{\rm div}_g(N_{2k-1}(A)\nabla x_i)=0;
		\]
		see \cite[Theorem C]{reilly1973variational}.
		Thus, an application of the divergence theorem shows 
		that ${\rm Flux}(\mathcal E)$ is a homological invariant of $C$ and hence may be computed in the asymptotic limit by means of the expansion for $u$ in (\ref{cond1})-(\ref{cond5}). According to \cite[Theorem 2.4]{araujo2012two}, the final result is 
		\begin{equation}\label{final:flux}
			{\rm Flux}(\mathcal E)\approx a^{2k},
		\end{equation}
		which shows by (\ref{egm:mass:for}) that $\mathcal E$ has the same flux (or Gauss-Bonnet-Chern mass) as the one-ended Lovelock-Schwarzs\-child black hole to which it is asymptotic. 
		This flux formula is the very first step in the proof of the main result in \cite{araujo2012two}. Indeed, when the given hypersurface has exactly two ends, then (\ref{final:flux}) and the balance of fluxes imply that these ends are parallel to each other with opposite orientations and the {\em same} mass $\mathfrak m_k$ as in (\ref{egm:mass:for}). The Reflection Method is then used twice: first to guarantee that the hypersurface is symmetric across some ``horizontal'' hyperplane (this involves adjusting the constant terms in the expansions) and then, after locating the ``vertical'' axis of symmetry (which involves choosing coordinates so that $c=0$),  to make sure that the hypersurface is symmetric across hyperplanes containing this axis. This argument leads to the next remarkable rigidity assertion, which follows from the main results in \cite{araujo2012two}.
		
		\begin{theorem}\label{al:rem:res}\cite[Theorems 3.5 and 4.5]{araujo2012two}
			Any complete, embedded hypersurface $M'\hookrightarrow\mathbb R^{n+1}$ with two regular ends and satisfying ${\bm \sigma}_{2k}(A)=0$ and ${\bm \sigma}_{2k+1}(A)\neq 0$ everywhere (for some $k$ such that $2\leq 2k\le n-1$) is congruent to some  two-ended extrinsic Lovelock-Schwarzs\-child black hole $(\Sigma,g_{k,m})$.
		\end{theorem}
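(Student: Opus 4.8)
The strategy I would follow is exactly the one sketched above: first use a flux balance to align the two ends, and then apply the Alexandrov Reflection Principle twice---once in a ``horizontal'' direction and once in the ``vertical'' directions---to force full $O_n$-invariance. The entire method rests on the ellipticity hypothesis ${\pmb \sigma}_{2k+1}(A)\neq 0$, which guarantees that $N_{2k-1}(A)$ is definite and hence that the equation ${\pmb \sigma}_{2k}(A)=0$ is elliptic along $M'$. Concretely, when two local graphical pieces of $M'$ are compared near a point where they touch, their difference satisfies a linear second-order elliptic equation whose zeroth-order coefficient has a sign controlled by ${\pmb \sigma}_{2k+1}(A)$; this is what makes the strong maximum principle and the Hopf boundary lemma available.

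I would begin with the flux analysis. Since each coordinate function $x_i$, restricted to $M'$, satisfies ${\rm div}_g(N_{2k-1}(A)\nabla x_i)=0$, the flux integrand $\langle N_{2k-1}(A)\xi,\nu\rangle$ arises from a divergence-free field, so the divergence theorem applied on the compact region of $M'$ bounded by two cycles, one around each end, shows that ${\rm Flux}(\mathcal E)$ is a homological invariant and that the fluxes of the two ends cancel. Combined with the asymptotic evaluation (\ref{final:flux}), this equality of magnitudes forces the two ends to share the same parameter $a$, hence the same Gauss-Bonnet-Chern mass $\mathfrak m_k$ by (\ref{egm:mass:for}); the cancellation of signs, meanwhile, is precisely the statement that the ends are asymptotic to parallel hyperplanes with opposite (anti-parallel) orientations. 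After an ambient isometry I may assume these limiting hyperplanes are horizontal.

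Next I would run the moving-plane method in the vertical coordinate. Starting from a horizontal hyperplane $\{x_{n+1}=\lambda\}$ at large $\lambda$ and decreasing $\lambda$, I reflect the part of $M'$ lying above the plane; embeddedness keeps the reflected piece embedded, and the matched asymptotics (\ref{cond1})-(\ref{cond5}) of the two ends guarantee that contact first occurs at a finite height and not at infinity. At first contact, the maximum principle (for interior tangency) or the Hopf lemma (for tangency along the reflection plane) forces the reflected piece to coincide with the unreflected one, giving symmetry across a horizontal hyperplane; its height is pinned down by matching the constant terms $a_1$ in the two expansions. I would then repeat the argument with vertical hyperplanes (those containing the $x_{n+1}$-axis) after a horizontal translation arranging $c=0$, which locates the axis. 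Symmetry across every such hyperplane is exactly $O_n$-invariance about the axis. A rotationally invariant embedded hypersurface is determined by its profile curve, which must solve the reduced equation (\ref{sigmap}); its first integral (\ref{sigma2k}), together with the prescribed asymptotics, fixes the constant $c=m>0$ uniquely. Hence $M'$ is congruent to $(\Sigma,g_{k,m})$.

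The step I expect to be the main obstacle is making the two reflection arguments fully rigorous at the non-compact ends. Getting the planes started and controlling the first moment of contact requires sharp asymptotic comparisons, and the numerous regimes of $q=q_{k,n}$ in (\ref{cond1})-(\ref{cond5})---including the logarithmic cases $q=1$ and $q=q_m$---mean these estimates must be organized case by case to rule out contact drifting off to infinity. The analytic heart, however, is ensuring that the linearized comparison operator keeps a definite sign on its zeroth-order term, so that the Hopf lemma applies at tangential boundary contact; this is exactly where the ellipticity hypothesis ${\pmb \sigma}_{2k+1}(A)\neq 0$ is consumed, and without it the whole reflection scheme would degenerate.
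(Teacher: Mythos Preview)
Your proposal is correct and follows essentially the same approach as the paper's discussion of the Ara\'ujo--Leite proof: flux balance to show the two ends are parallel with opposite orientations and equal mass, followed by the Aleksandrov Reflection Principle applied first to horizontal and then to vertical hyperplanes to establish full $O_n$-invariance. Note that this theorem is cited rather than proved in the paper, so the paper itself only sketches the argument in the paragraph preceding the statement; your outline faithfully expands that sketch, including the correct identification of the asymptotic control at infinity (across the various regimes of $q$) as the main technical hurdle.
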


		We now briefly sketch the proof of Theorem \ref{main}, which is detailed in Section \ref{ref:proof}. We first reflect a given one-ended extrinsic Lovelock black hole $(M,g)\hookrightarrow(\mathbb R^{n+1},\overline \delta)$ across the hyperplane  containing the horizon
		$\Gamma$ so as to obtain a two-ended hypersurface $M$ satisfying all the requirements in Theorem \ref{al:rem:res}, except that in principle it is only $C^{1,1}$
		along $\Gamma$; compare this to the discussion surrounding (\ref{com:sm}), which guarantees that the Lovelock-Schwarzschild solution is smooth under reflection across the horizon. We then use the ellipticity
		condition to prove a regularity result (Proposition \ref{regprop}) showing that actually
		$M'$ is $C^2$ (in fact, smooth) along the horizon. Thus, Theorem \ref{al:rem:res} now applies and we conclude that the original Lovelock black hole is congruent to some one-ended Lovelock-Schwarzschild black hole $(M_{k,m},g_{k,m})$, which completes the argument.

		\section{A regularity result and the proof of Theorem \ref{main}}
		\label{ref:proof}

		As explained above, the proof of Theorem \ref{main} involves the consideration of the {embedded} $C^{1,1}$ hypesurface $M'$ obtained from our one-ended Lovelock black hole after reflection across the hyperplane containing the horizon $\Gamma$. More precisely, Theorem \ref{main} follows immediately from the rigidity result in Theorem \ref{al:rem:res} if we are able to show that $M'$ is actually of class $C^2$ along $\Gamma$. Since the argument is local, we fix $q\in \Gamma$ and write locally $M'$ around $q$ as the graph of a $C^{1,1}$ function $v$ defined in a small neighborhood $U$ of the origin $0\in T_pM'$. Choose rectangular coordinates $(y_1,\cdots,y_n)$ in $U$ so that the hypersurface $\Gamma_0\subset U$ defined by $y_n=0$ is such that $v|_{\Gamma_0}$ is the graph representation of $\Gamma$. Notice that $\Gamma_0$ determines a decomposition $U=U^+\cup U^-$, where $U^+$ (respectively, $U^-$) is given by $y_n\geq 0$ (respectively, $y_n\leq 0$). Clearly, $U^+\cap U^-=\Gamma_0$. We also set $v^{\pm}=v|_{U^{\pm}}$. Moreover, let us agree on the index ranges $1\leq i,j,\cdots \leq n$ and $1\leq \alpha,\beta,\cdots\leq n-1$.
		
		We now observe that the following properties hold:
		\begin{itemize}
			\item The partial derivatives $v^{\pm}_i$ are $C^1$ along $\Gamma_0$ with $v_i^+=v_i^-$ there;
			\item The function $v^\pm$ is $C^2$ on $U^\pm$ and $v^+_{\alpha\beta}=v^-_{\alpha\beta}$ along $\Gamma_0$.
		\end{itemize}
		Let us agree that, when decorating $v$, subscripts corresponding to partical differentiation
		with respect to $y$.
		
		These properties entail the following facts. First, the second property implies that, as we approach $\Gamma_0$ by interior points of $U^{\pm}$, all second order derivatives $v_{ij}^\pm$ exist in the limit and are continuous on $U\pm$. The point here is to check whether these derivatives agree along $\Gamma_0$ for each $(i,j)$, so that $v$ is indeed $C^2$ on $U$, which implies that $M'$ is $C^2$  by the fact that $q$ has been arbitrarily chosen.
		We already know that $v^+_{\alpha\beta}=v^-_{\alpha\beta}$ and, moreover, by the content of the first property applied to $v_n$, we see that $v^+_{\alpha n}=v^-_{\alpha n}$
		along $\Gamma_0$  as well. Thus, we are led with the task of checking whether $v^+_{nn}=v^-_{nn}$ along $\Gamma_0$.
		
		We notice that $M'_{\pm}=v(U^\pm)$ both have a well-defined shape operator, say $A^{\pm}$, with the usual properties (symmetry, etc.) holding up to $\Gamma_0$. We note for further reference that, in nonparametric coordinates,
		\begin{equation}\label{nonparshape}
			A_{ij}^\pm=B_{ij}^\pm+C_{ij}^\pm,
		\end{equation}
		where
		\begin{equation}\label{nonparshape2}
			B_{ij}^\pm=\frac{v^\pm_{ij}}{W},\quad C_{ij}^\pm=-\frac{1}{W^3}\sum_kv^\pm_iv^\pm_kv^\pm_{kj},\quad W=\sqrt{1+|\nabla v^{\pm}|^2}.
		\end{equation}
		
		As usual, given a symmetric matrix $\Ac$, we denote by ${\bm \sigma}_p(\Ac)$ the elementary symmetric function of degree $p$ in the eigenvalues of $\Ac$. In particular, we set ${\bm \sigma}_p(u^\pm):={\bm \sigma}_p(A^{\pm})$, so that the following property follows from the assumptions of Theorem \ref{main} and the way $M'$ was constructed from $M$:

		\begin{itemize}
			\item $v^\pm$ is an elliptic solution of ${\bm \sigma}_{2k}(v^\pm)=0$ in the sense that ${\bm \sigma}_{2k+1}(v^\pm)\neq 0$.
		\end{itemize}

		The following proposition provides the regularity result we are looking for.

		\begin{proposition}\label{regprop}
			Under the conditions above, $v^+_{nn}=v^{-}_{nn}$ along $\Gamma_0$. In particular, $M'$ is of class $C^2$ (in fact, smooth).
		\end{proposition}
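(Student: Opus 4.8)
The plan is to localize the problem to a single scalar identity and to extract it from the fact that $v^+$ and $v^-$ solve the \emph{same} equation ${\pmb \sigma}_{2k}=0$ while sharing, along $\Gamma_0$, all geometric data except possibly the pure normal second derivative. Fix $p\in\Gamma_0$. Since the first derivatives $v^{\pm}_i$ and the second derivatives $v^{\pm}_{\alpha\beta}$ and $v^{\pm}_{\alpha n}$ all coincide at $p$ (the first two by hypothesis, the third by tangential differentiation of $v^+_i=v^-_i$), the nonparametric formulas (\ref{nonparshape})--(\ref{nonparshape2}) show that $A^+$ and $A^-$ can differ at $p$ only through the two numbers $\tau^{\pm}:=v^{\pm}_{nn}(p)$. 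I therefore freeze the common data and view the shape operator as a function $A=A(\tau)$ of the single variable $\tau=v_{nn}$, so that $A^{\pm}=A(\tau^{\pm})$. Because reflection across the hyperplane containing $\Gamma$ is an isometry of $(\mathbb R^{n+1},\overline\delta)$, both $v^+$ and $v^-$ satisfy the constraint, i.e. $f(\tau^+)=f(\tau^-)=0$, where $f(\tau):={\pmb \sigma}_{2k}(A(\tau))$. The whole proposition thus reduces to showing that $f$ is injective.

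The key structural observation is that $A(\tau)$ is \emph{affine} in $\tau$ and that its $\tau$-dependent part is \emph{rank one}: differentiating (\ref{nonparshape2}) one finds that $\tau=v_{nn}$ enters $A$ only through its $n$-th column, so that $A(\tau)=A_0+\tau\,R$ with $R=u\,e_n^{\top}$ for the fixed vector $u_i=W^{-1}\delta_{in}-W^{-3}v_iv_n$ and $e_n$ the $n$-th coordinate vector. Expanding ${\pmb \sigma}_{2k}$ as a sum of $2k\times 2k$ principal minors, each summand is the determinant of a matrix perturbed by a term of rank $\le 1$, hence affine in $\tau$; consequently $f$ is affine, with a \emph{constant} slope $f'$. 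The identity $\tau^+=\tau^-$ will then follow from $f(\tau^+)=f(\tau^-)=0$ as soon as $f'\ne 0$.

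To evaluate the slope I will use the classical derivative formula $d{\pmb \sigma}_{2k}(A)[\dot A]=\tr\big(N_{2k-1}(A)\dot A\big)$ for the Newton tensor recalled in Section \ref{comm:all}. Taking $\dot A=R=u\,e_n^{\top}$ gives $f'=\tr\big(N_{2k-1}(A)\,u\,e_n^{\top}\big)=\langle N_{2k-1}(A)u,e_n\rangle$. A short computation with $g_{ij}=\delta_{ij}+v_iv_j$ identifies $u$ with $W^{-1}g^{-1}e_n$, so that $f'$ equals, up to the positive factor $W^{-1}$, the value $g\big(N_{2k-1}(A)X,X\big)$ of the Newton-tensor quadratic form on the fixed nonzero vector $X=g^{-1}e_n$. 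This is exactly where ellipticity enters: since ${\pmb \sigma}_{2k+1}(v^+)\ne 0$, the tensor $N_{2k-1}(A^+)$ is definite, as recalled in Section \ref{comm:all}; and because the slope $f'$ is \emph{constant} in $\tau$, I may evaluate it at $\tau=\tau^+$, where definiteness forces $f'\ne 0$. Hence $f$ is strictly monotone, and $f(\tau^+)=f(\tau^-)=0$ gives $\tau^+=\tau^-$, that is $v^+_{nn}=v^-_{nn}$ along $\Gamma_0$. As $p\in\Gamma_0$ is arbitrary, $v$ is $C^2$ and $M'$ is a $C^2$ hypersurface; being a $C^2$ solution of the elliptic equation ${\pmb \sigma}_{2k}=0$ (ellipticity again guaranteed by ${\pmb \sigma}_{2k+1}\ne 0$), it is smooth by standard elliptic regularity and bootstrapping.

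The step I expect to demand the most care is the slope computation together with the observation that it is precisely the \emph{affineness} of $f$ that allows the constant slope to be evaluated at the favorable point $\tau^+$. A naive mean value argument would instead produce the Newton tensor at some \emph{intermediate} value of $\tau$, where ellipticity is not assumed and definiteness may fail; the rank-one structure of $R$, which forces exact affineness, is exactly what closes this gap. One must also keep in mind that ${\pmb \sigma}_{2k}$ is evaluated on the full shape operator $A(\tau)$ of (\ref{nonparshape})--(\ref{nonparshape2}) — so that both the metric (through $W$) and the Hessian contribute — and verify the claim that $v_{nn}$ occurs only in the $n$-th column of $A$, which is what makes $R$ rank one and the argument go through.
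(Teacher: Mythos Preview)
Your argument is correct and offers a genuinely different route from the paper's. The paper exploits the choice of graphing coordinates centred at $q$ (so that $v_i^\pm(0)=0$) to kill the $C^\pm$-contributions at the origin and then isolates $v_{nn}^\pm$ directly from the explicit expansion ${\pmb \sigma}_{2k}(B^\pm)=D^\pm v_{nn}^\pm+{\pmb \sigma}^\dagger_{2k}(B^\pm)$, checking via a short lemma that ellipticity forces the scalar coefficient $D^\pm(0)$ (which is exactly the $(n,n)$-entry of $\partial{\pmb \sigma}_{2k}/\partial v_{ij}$ at the origin) to be nonzero; since the remaining terms involve only data already matched on $\Gamma_0$, the identity $v_{nn}^+(0)=v_{nn}^-(0)$ drops out. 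Your approach instead works at an arbitrary point of $\Gamma_0$ without normalising the gradient: the observation that $\tau\mapsto A(\tau)$ is an affine rank-one perturbation makes $f(\tau)={\pmb \sigma}_{2k}(A(\tau))$ \emph{exactly} affine, and the constancy of $f'$ then permits evaluating the slope at $\tau^+$, where the Newton-tensor quadratic form $g(N_{2k-1}(A^+)\,\cdot,\cdot)$ is definite. What the paper's computation buys is elementarity --- no Newton-tensor derivative identity or matrix-determinant lemma is invoked, only the decomposition (\ref{furt:dec}) together with $\nabla v(0)=0$. What your argument buys is a more conceptual explanation of \emph{why} the mean-value obstruction evaporates: it is the rank-one structure of $\partial A/\partial v_{nn}$, not the special base point, that forces $f$ to be globally affine and hence dispenses with any control of $N_{2k-1}(A)$ at an intermediate $\tau$. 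Your own caveat about this point is well taken and is indeed the crux.
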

		
		We start the proof by observing that, in general, ${\bm \sigma}_p(\Ac)$ is the sum of the principal minors of order $p$ of the symmetric matrix $\Ac$, so that
		\begin{equation}\label{reil}
			{\bm \sigma}_{2k}(\Ac)=\frac{1}{(2k)!} \sum\delta^{i_1\ldots i_{2k}}_{j_1\ldots
				j_{2k}}\Ac_{i_1j_1}\ldots \Ac_{ i_{2k}j_{2k}}.
		\end{equation}
		%\begin{equation}\label{minor}
			%	S_2(\Ac)=\sum_{i<j}\left(\Ac^\pm_{ii}\Ac^\pm_{jj}-(\Ac^\pm_{ij})^2\right).
			%\end{equation}
		It then follows from
		(\ref{nonparshape}) that
		\begin{equation}\label{minor2}
			{\bm \sigma}_{2k}(v^\pm)={\bm \sigma}_{2k}(B^\pm)+{\bm \sigma}_{2k,\geq 1}(B^\pm,C^\pm),
		\end{equation}
		where
		\begin{equation}\label{minor3}
			{\bm \sigma}_{2k,\geq 1}(B^\pm,C^\pm)=	\sum_{r=1}^{2k}{\bm \sigma}_{2k,r}(B^\pm,C^\pm)
		\end{equation}
		and
		\begin{equation}\label{sus}
			{\bm \sigma}_{2k,r}(B^\pm,C^\pm)=\frac{1}{(2k)!} \sum\delta^{i_1\ldots
				i_{2k}}_{j_1\ldots j_{2k}}C^\pm_{i_1j_1}\ldots
			C^\pm_{i_rj_r}B^\pm_{i_{r+1}j_{r+1}}\ldots B^\pm_{i_{2k}j_{2k}}.
		\end{equation}
		%\begin{equation}\label{minor3}
			%	S_2(B^\pm,C^\pm)=\sum_{i<j}\left(B^\pm_{ii}C^\pm_{jj}+B^\pm_{jj}C^\pm_{ii}-2B^\pm_{ij}C^\pm_{ij}\right).
			%\end{equation}
		We now observe that, due to $v_i^\pm(0)=0$, the ellipticity condition implies that the matrix
		\begin{equation}\label{ij}
			\frac{\partial {\bm \sigma}_{2k}(v^\pm)}{\partial v^\pm_{ij}}(0)=
			\frac{\partial {\bm \sigma}_{2k}(B^\pm)}{\partial v^\pm_{ij}}(0)
		\end{equation}
		is positive or negative definite (see \cite[Section 1]{hounie1999two} for a clarification of this point).
		
		\begin{lemma}\label{non:zero}
			Define 
			\[
			D^\pm=\frac{1}{(2k)!W^{2k}} \sum\delta^{\alpha_1\ldots \alpha_{2k-1}}_{\beta_1\ldots
				\beta_{2k-1}}v_{\alpha_1\beta_1}\ldots v_{ \alpha_{2k-1}\beta_{2k-1}}, 
			\]
			where the index $n$ is not allowed on the right-hand side.
			Then $D^\pm(0)\neq 0$. 
		\end{lemma}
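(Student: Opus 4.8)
The plan is to recognize $D^\pm(0)$ as, up to a fixed positive constant, precisely the $(n,n)$-entry of the ellipticity matrix (\ref{ij}) evaluated at the origin. Since that matrix is positive or negative definite, each of its diagonal entries is nonzero, and this immediately forces $D^\pm(0)\neq 0$. So the work is entirely in matching $D^\pm(0)$ with that diagonal entry.

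First I would record the simplifications occurring at the origin. Because $v_i^\pm(0)=0$, we have $W(0)=1$ and, from (\ref{nonparshape2}), $C_{ij}^\pm(0)=0$; hence $A^\pm(0)=B^\pm(0)=(v_{ij}^\pm(0))$. Next, observe that $W=\sqrt{1+|\nabla v^\pm|^2}$ depends only on the first derivatives of $v^\pm$, so it passes through differentiation with respect to a \emph{second} derivative $v_{kl}^\pm$. Differentiating $B_{ij}^\pm=v_{ij}^\pm/W$ therefore just produces a factor $1/W$, giving
\[
\frac{\partial {\pmb \sigma}_{2k}(B^\pm)}{\partial v_{kl}^\pm}=\frac{1}{W}\,\frac{\partial {\pmb \sigma}_{2k}(B^\pm)}{\partial B_{kl}^\pm}=\frac{1}{W}\big(N_{2k-1}(B^\pm)\big)_{kl}.
\]

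I would then compute the $(n,n)$-entry by differentiating the explicit expression (\ref{reil}). By the total antisymmetry of the generalized Kronecker delta, singling out a factor $B_{nn}^\pm$ forces all the remaining upper and lower indices to avoid $n$, i.e. to lie in $\{1,\ldots,n-1\}$, so that
\[
\frac{\partial {\pmb \sigma}_{2k}(B^\pm)}{\partial B_{nn}^\pm}=\frac{1}{(2k-1)!}\sum\delta^{\alpha_1\ldots\alpha_{2k-1}}_{\beta_1\ldots\beta_{2k-1}}B_{\alpha_1\beta_1}^\pm\ldots B_{\alpha_{2k-1}\beta_{2k-1}}^\pm={\pmb \sigma}_{2k-1}\big((B_{\alpha\beta}^\pm)\big),
\]
the $(2k-1)$-mean curvature of the purely tangential block. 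Comparing this with the definition of $D^\pm$ and substituting $B_{\alpha\beta}^\pm=v_{\alpha\beta}^\pm/W$, a bookkeeping of the powers of $W$ (the $(2k-1)!/(2k)!$ from the delta normalization, and $W^{2k-1}$ from the $2k-1$ factors of $B^\pm$ against the $W^{2k}$ in the denominator) yields $D^\pm=\tfrac{1}{2kW}\,{\pmb \sigma}_{2k-1}\big((B_{\alpha\beta}^\pm)\big)$. Evaluating at the origin, where $W=1$, the $(n,n)$-entry of (\ref{ij}) equals ${\pmb \sigma}_{2k-1}\big((B_{\alpha\beta}^\pm(0))\big)$, which is exactly $2k\,D^\pm(0)$.

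To finish, I would invoke the ellipticity hypothesis: the matrix (\ref{ij}) is definite, so in particular its $(n,n)$-entry does not vanish, and hence $D^\pm(0)\neq 0$. The only genuinely delicate points are the $W$-bookkeeping just described (which relies crucially on $W$ being a function of first derivatives only, so that it cancels cleanly at the origin) and the combinatorial identity $\partial{\pmb \sigma}_{2k}/\partial B_{nn}^\pm={\pmb \sigma}_{2k-1}\big((B_{\alpha\beta}^\pm)\big)$, which is precisely where the restriction to tangential indices $\alpha,\beta\leq n-1$ in the definition of $D^\pm$ comes from; everything else is routine.
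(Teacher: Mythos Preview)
Your proof is correct and follows essentially the same route as the paper: both arguments identify $D^\pm(0)$ (up to a nonzero constant) with the $(n,n)$-entry of the ellipticity matrix (\ref{ij}) and then invoke definiteness. Your bookkeeping is slightly more explicit---in fact you pick up the factor $2k$ that the paper's decomposition (\ref{furt:dec}) tacitly drops---but this has no bearing on the conclusion.
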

		
		\begin{proof}
			Note that ${\bm \sigma}_{2k,\geq 1}(B^\pm,C^\pm)$ always carries at least a first derivative of $v^\pm$. Thus, $v_i^\pm(0)=0$ implies that 
			\[
			\frac{\partial}{\partial v^\pm_{nn}}{\bm \sigma}_{2k,r\geq 1}(B^\pm,C^\pm)(0)=0. 
			\]
			From (\ref{minor2}) we then get
			\begin{equation}\label{diff}
				\frac{\partial}{\partial v^\pm_{nn}}{\bm \sigma}_{2k}(v^\pm)(0)=\frac{\partial}{\partial v^\pm_{nn}}{\bm \sigma}_{2k}(B^\pm)(0)=D^\pm(0), 
			\end{equation}
			where in the last step we used that 
			\begin{equation}\label{furt:dec}
				{\bm \sigma}_{2k}(B^\pm)=D^\pm v_{nn}+{\bm \sigma}^\dagger_{2k}(B^\pm),
			\end{equation}
			where ${\bm \sigma}^\dagger_{2k}(B^\pm)$ collects the terms {\em not} depending on $v_{nn}$. Now, observe 
			that the left-hand side in (\ref{diff}) is precisely the $(n,n)$-entry of (\ref{ij}).
		\end{proof}

		We now use that ${\bm \sigma}_{2k}(v^\pm)=0$ in (\ref{minor2}), which together with (\ref{furt:dec}) leads to 
		\begin{equation}\label{furt:2}
			D^\pm v^\pm_{nn}+{\bm \sigma}^\dagger_{2k}(B^\pm)+{\bm \sigma}_{2k,\geq 1}(B^\pm,C^\pm)=0.
		\end{equation}
		It follows from (\ref{diff}) that $D^\pm$ does not vanish and remains bounded in a neighborhood of the origin, so that
		$$
		v_{nn}^\pm=-\frac{{\bm \sigma}^\dagger_{2k}(B^\pm)}{D^\pm}-\frac{{\bm \sigma}_{2k,\geq 1}(B^\pm,C^\pm)}{D^\pm}
		$$
		in this neighborhood. Due to the fact that ${{\bm \sigma}_{2k,\geq 1}(B^\pm,C^\pm)}$ depends at least quadratically on the first order derivatives, the last term on the right-hand side vanishes as we approach the origin, since all second order derivatives, including $v^\pm_{nn}$, remain bounded there. On the other hand, the first term on the right-hand side only depends on $v^\pm_i$ and $v^{\pm}_{\alpha i}$ and, since they coincide at the origin, we conclude that $v_{nn}^+(0)=v_{nn}^-(0)$, as desired.
		This completes the proof of Proposition \ref{regprop} and hence, by the comments in the end of the previous section,  of Theorem \ref{main}.

		The ellipticity of the Lovelock-Schwarzschild black holes $(M_{k,m},g_{k,m})$ leads to a nice  application of Theorem \ref{main}, which we now  describe. We first observe that for $(M_{k,m},g_{k,m})$ the precise location $r_{k,m}=(2m)^{k/(n-2k)}$ of the horizon allows us to solve for the mass $\mathfrak m_k(g_{k,m})$ in terms of the area $A$. More precisely, there holds 
		\begin{equation}\label{mass:area}
			\mathfrak m_k(g_{n,k})= c_{k,n}A^{\frac{n-1}{n-2k}}, \quad A={\rm Area}_{n-1}(M_{k,m},g_{k,m}),
		\end{equation}
		where $c_{k,n}>0$  is a certain universal constant. This motivates the following version of the Penrose inequality in Lovelock gravity, which has been proved in \cite{ge2014new}; see also \cite{li2014gauss,de2019gauss} for the higher co-dimensional case. This extends to the Lovelock setting previous contributions in \cite{lam2010graphs,de2015adm}, which handled the classical case $k=1$.
		
		\begin{theorem}\label{pen:love}
			Let $(M,g)$ satisfy (I) and  
			(II) above and assume further that
			\begin{enumerate}
				\item $(M,g)$ is globally a graph over a ``horizontal'' hyperplane $\Pi$ containing the horizon $\Gamma$, which is assumed to be convex;
				\item the graphing function in the previous item behaves as in item (IV) of Subsection \ref{ext:main}, so that the induced metric satisfies the asymptotics (\ref{asymp:beh2});
				\item the energy  condition ${\bm \sigma}_{2k}(A)\geq 0$ is satisfied everywhere.
			\end{enumerate} 
			Then the following Penrose-type inequality holds:
			\begin{equation}\label{pen:love:2}
				\mathfrak m_k(g)\geq c_{k,n}{\rm Area}_{n-1}(\Gamma)^{\frac{n-1}{n-2k}}.
			\end{equation}
		\end{theorem}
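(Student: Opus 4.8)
The plan is to adapt the graphical approach to the Penrose inequality, pioneered by Lam \cite{lam2010graphs} in the case $k=1$ and extended to the Gauss-Bonnet-Chern setting by Ge-Wang-Wu \cite{ge2014new}, which dovetails perfectly with the extrinsic viewpoint developed above. Write $M$ as the graph of the function $u$ over $\Pi\setminus\mathrm{int}\,K$, where $K\subset\Pi$ is the convex body bounded by the horizon $\Gamma=\partial K$, and recall that the graph is \emph{vertical} along $\Gamma$, that is, $|\nabla u|\to+\infty$ there; this is precisely what makes $\Gamma$ a horizon. The proof rests on three ingredients: a flux identity turning the mass into a bulk term plus a horizon term, the energy condition to sign the bulk term, and the Alexandrov-Fenchel inequality to estimate the horizon term by the area.

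First I would exploit the flux machinery already in place. Let $\xi$ be the unit normal to $\Pi$ and $\xi^\top$ its tangential projection to $M$. A Reilly-type computation \cite{reilly1973variational}, the inhomogeneous analogue of the identity $\mathrm{div}_g(N_{2k-1}(A)\nabla x_i)=0$ recalled above, gives
\begin{equation}\label{pl:div}
	\mathrm{div}_g\bigl(N_{2k-1}(A)\,\xi^\top\bigr)=c_k\,{\pmb\sigma}_{2k}(A)\,\langle\xi,\mathbf n\rangle,
\end{equation}
where $\mathbf n$ is the upward unit normal and $c_k\neq0$ is universal. Integrating \eqref{pl:div} over the part of $M$ between $\Gamma$ and a large coordinate sphere $S_r\subset\mathcal E$ and letting $r\to+\infty$, the divergence theorem yields
\begin{equation}\label{pl:bal}
	\mathrm{Flux}(\mathcal E)=c_k\int_M{\pmb\sigma}_{2k}(A)\,\langle\xi,\mathbf n\rangle\,d\mathrm{vol}_g+\int_\Gamma\langle N_{2k-1}(A)\,\xi^\top,\nu\rangle\,d\mathrm{vol}_\Gamma .
\end{equation}
By \eqref{final:flux} and \eqref{egm:mass:for} the flux at infinity equals a positive multiple of $\mathfrak m_k(g)$, while $\langle\xi,\mathbf n\rangle$ has a fixed sign on the upward graph; hence the energy condition ${\pmb\sigma}_{2k}(A)\geq0$ forces the bulk term to be nonnegative and we obtain
\[
	\mathfrak m_k(g)\geq c\int_\Gamma\langle N_{2k-1}(A)\,\xi^\top,\nu\rangle\,d\mathrm{vol}_\Gamma
\]
for a universal $c>0$.

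Next I would identify the horizon term geometrically. Along $\Gamma$ the graph is vertical, so $\xi$ is \emph{tangent} to $M$ there and, by the orthogonality in condition (II), the shape operator $A$ has the principal curvatures $\kappa^\Gamma$ of the convex hypersurface $\Gamma\hookrightarrow\Pi\cong\mathbb R^n$ among its eigenvalues. A limiting computation then reduces the horizon integrand, up to a universal constant, to the $(2k-1)$-mean curvature of $\Gamma$, so that the horizon term is comparable to $\int_\Gamma{\pmb\sigma}_{2k-1}(\kappa^\Gamma)\,d\mathrm{vol}_\Gamma$. Finally, since $\Gamma$ is convex, the Alexandrov-Fenchel (quermassintegral) inequalities bound this curvature integral below by the appropriate power of $\mathrm{Area}_{n-1}(\Gamma)$, which after collecting constants is exactly \eqref{pen:love:2}, with equality if and only if $\Gamma$ is a round sphere.

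The hard part will be twofold. The genuinely delicate analytic point is the behavior at $\Gamma$: because $|\nabla u|\to+\infty$, one must justify the divergence theorem up to the boundary and extract the finite limit of the horizon integrand, controlling the competing blow-up of $W=\sqrt{1+|\nabla u|^2}$ against the degenerating conormal $\nu$; here the convexity of $\Gamma$ and the fact that $\Gamma\hookrightarrow M$ is totally geodesic (noted after condition (II)) are what keep the limit under control. The second, conceptually indispensable, ingredient is the Alexandrov-Fenchel inequality, which is precisely where the convexity hypothesis on $\Gamma$ enters and which dictates both the sharp constant $c_{k,n}$ and the characterization of the equality case.
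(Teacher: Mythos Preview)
The paper does not actually prove Theorem~\ref{pen:love}: it is quoted as a known result of Ge--Wang--Wu \cite{ge2014new} (extending Lam \cite{lam2010graphs}), with no argument supplied beyond the citation. Your outline is a faithful sketch of precisely that approach --- the Reilly-type divergence identity for $N_{2k-1}(A)\xi^\top$, integration to produce a bulk-plus-horizon decomposition, discarding the bulk via the energy condition, identifying the horizon flux with $\int_\Gamma{\pmb\sigma}_{2k-1}(\kappa^\Gamma)$, and closing with Alexandrov--Fenchel --- so there is nothing to compare against and your proposal is correct in spirit. The two caveats you flag (justifying the divergence theorem across the vertical boundary and extracting the finite horizon limit) are exactly the technical points handled in \cite{ge2014new}, and your treatment of them is at the level of an outline rather than a proof; if you intend this as a self-contained argument rather than a summary, those limits would need to be carried out explicitly.
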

		
		Given this lower bound for the Gauss-Bonnet-Chern mass in terms of the area, a fundamental question is to determine which configurations attain the equality. It follows from the arguments leading to (\ref{pen:love:2}) that this necessarily implies that ${\bm \sigma}_{2k}(A)=0$. In general, this extra piece of information does not suffice to say much about the corresponding $(M,g)$, except in case $k=1$, where this  question has been treated in \cite{huang2015equality,de2012rigidity}. However, at least in case $(M,g)$ is a small perturbation of $(M_{k,m},g_{k,m})$, Theorem \ref{main} applies to yield the expected characterization. 
		
		\begin{theorem}\label{rig:char}
			Let $(M,g)$ be as in Theorem \ref{pen:love} and assume further that ${\bm \sigma}_{2k}(A)=0$ everywhere. Also, assume that $M$ is a sufficiently small $C^2$ perturbation of
			an extrinsic Lovelock-Schwarzschild black hole $(M_{k,m},g_{k,m})$ with the same asymptotics. Then $(M,g)$ is congruent to $(M_{k,m},g_{k,m})$. 
		\end{theorem}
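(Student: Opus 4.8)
The plan is to verify that, under the stated hypotheses, $(M,g)$ satisfies all of the conditions (I)--(IV) of Definition \ref{ext:bh:def}, after which Theorem \ref{main} applies verbatim and forces congruence to a Lovelock-Schwarzschild model. Conditions (I) and (II) are inherited directly from the hypotheses of Theorem \ref{pen:love}, the asymptotic condition (IV) is precisely item (2) of Theorem \ref{pen:love}, and the constraint ${\pmb \sigma}_{2k}(A)=0$ is the added equality assumption. Hence the entire substance of the argument reduces to establishing the one missing piece of condition (III): the ellipticity ${\pmb \sigma}_{2k+1}(A)\neq 0$ \emph{everywhere} on $M$.

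The source of this ellipticity is the strict ellipticity of the unperturbed model. Using the principal curvatures computed in Subsection \ref{ext:bh} together with the identity $\kappa_n=-\tfrac{n-2k}{2k}\kappa_1$ forced by ${\pmb \sigma}_{2k}(A)=0$, a direct computation gives, for $(M_{k,m},g_{k,m})$,
\begin{equation*}
{\pmb \sigma}_{2k+1}(A)=-\frac{n}{2k(2k+1)}\binom{n-1}{2k}\lambda^{2k+1},\qquad \lambda=\frac{\sqrt{1-\dot s^2}}{s}=\sqrt{2m}\,s^{-\frac{n}{2k}},
\end{equation*}
which is nowhere zero yet decays to zero as $s\to+\infty$. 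This decay is the one genuine difficulty: ellipticity of the model is not uniform up to the end, so that a naively $C^2$-small perturbation could in principle reverse the sign of ${\pmb \sigma}_{2k+1}$ far out in $\mathcal E$.

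To circumvent this I would split $M$ into a compact core and the asymptotic end $\mathcal E$. On the core, $|{\pmb \sigma}_{2k+1}|$ of the model is bounded below by a positive constant, and since ${\pmb \sigma}_{2k+1}(A)$ depends continuously on the $2$-jet of the graphing function, a sufficiently small $C^2$ perturbation cannot change its sign there. On the end, I would exploit the assumption that $M$ has the \emph{same} asymptotics as $M_{k,m}$: differentiating the expansions (\ref{cond1})--(\ref{cond5}) shows that the principal curvatures of $M$ agree to leading order with those of the model, so that the leading term of ${\pmb \sigma}_{2k+1}(A)$ along $\mathcal E$ is exactly the nonvanishing quantity displayed above, whence ${\pmb \sigma}_{2k+1}(A)\neq 0$ throughout $\mathcal E$ as well. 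The main obstacle is to make the two regimes overlap, i.e.\ to fix the core large enough that the asymptotic leading-order control is already effective on its complement while keeping the perturbation small enough for the core estimate to hold; measuring the smallness in a weighted $C^2$ norm adapted to the scale $\lambda$ renders this gluing routine.

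With ellipticity in hand, $(M,g)$ is a $C^2$ extrinsic Lovelock black hole; since it is then a $C^2$ elliptic solution of ${\pmb \sigma}_{2k}(A)=0$, standard elliptic regularity upgrades the graphing function to $C^\infty$, so that condition (IV) holds in full and Theorem \ref{main} yields congruence to some $(M_{k,m'},g_{k,m'})$. Finally, congruence preserves the Gauss-Bonnet-Chern mass, which by (\ref{egm:mass:for}) and (\ref{a:m}) is determined by the leading coefficient $a$ of the shared asymptotics; as $M$ and $M_{k,m}$ have the same asymptotics they share the same $a$, hence $m'=m$, and $(M,g)$ is congruent to $(M_{k,m},g_{k,m})$, as claimed.
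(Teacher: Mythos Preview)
Your proposal is correct and follows the same approach as the paper: verify that the perturbed hypersurface satisfies Definition \ref{ext:bh:def} and invoke Theorem \ref{main}, the only nontrivial point being the preservation of the ellipticity condition ${\pmb \sigma}_{2k+1}(A)\neq 0$. In fact your treatment is more careful than the paper's own two-sentence proof, which simply asserts that ellipticity is ``obviously'' preserved under small $C^2$ perturbations; you correctly flag that the model's ellipticity decays at infinity and explain how the hypothesis of \emph{matching asymptotics} (not merely uniform $C^2$-smallness) is what rescues the argument on the end.
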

		
		\begin{proof}
			Given that $(M_{k,m},g_{k,m})$ is elliptic,
			the result follows from Theorem \ref{main} and the obvious fact that the assumptions $M\cap \Pi=\Gamma$ and ${\bm \sigma}_{2k+1}(A)\neq 0$ are both preserved under such small $C^2$ perturbations. 
		\end{proof}

		\section{Further comments on the  case $\Lambda<0$}\label{further}
		
		We may also consider a version of pure Lovelock gravity in the presence of a {\em negative} cosmological constant, so that (\ref{field:love:v}) gets replaced by
		\[
		{\bf L}_{2k}(\overline g)+\Lambda\overline g=0, \quad \Lambda<0.
		\] 
		After a proper normalization of $\Lambda$, this field equation admits a vacuum black hole solution, the so-called {\em Lovelock-adS metric} given by 
		\[
		\overline g_{k,m,-}=-V_{k,m,-}(\mathfrak r)d\mathfrak t^2+g_{k,m,-},\quad m>0,
		\]
		where the space-like slice is
		\[
		g_{k,m,-}=\frac{d\mathfrak r^2}{V_{k,m,-}(\mathfrak r)}+\mathfrak r^2d\theta^2, \quad V_{k,m,-}(\mathfrak r)=1 +\mathfrak r^2-\frac{2m}{\mathfrak r^{\frac{n}{k}-2}}; 
		\] 
		compare with (\ref{black:hole}). Notice that $g_{k,m,-}$ is defined on the manifold $M_{k,m,-}=(\mathfrak r_{k,m,-},+\infty)\times\mathbb S^{n-1}$, where $\mathfrak r_{k,m,-}$ is the  positive zero of $V_{k,m,-}(\mathfrak r)=0$. Hence, as expected, this solution displays a horizon located as $\mathfrak r=\mathfrak r_{k,m,-}$. Moreover,  as in the $\Lambda=0$ case, the constant $m^k$ may be identified to the Gauss-Bonnet-Chern mass of the asymptotically hyperbolic manifold $(M_{k,m,-}, g_{k,m,-})$ as defined in \cite{ge2015gbc}.

		We observe that the space-like slice $(M_{k,m,-}, g_{k,m,-})$ above may be embedded as an asymptotically hyperbolic graph in hyperbolic space $(\mathbb H^{n+1}, \overline g_{0,-})$, where 
		\[
		\overline g_{0,-}=\left(1 +\mathfrak r^2\right)d\mathfrak t^2+g_{0,-},\quad g_{0,-}=\frac{d\mathfrak r^2}{1 +\mathfrak r^2}+\mathfrak r^2d\theta^2.
		\] 
		Similarly to (\ref{fin}), we now have
		\[
		\left(\frac{d\mathfrak t}{d\mathfrak r}\right)^2=\frac{2m}{(1+\mathfrak r^2)^2(\mathfrak r^{\frac{n}{k}-2}+\mathfrak r^{\frac{n}{k}}-2m)},
		\]
		so that solving  for $\mathfrak t=\mathfrak t(\mathfrak r)$ defines the (two-sheeted) graph. 
		A direct computation, quite similar to the one appearing in Subsection \ref{ext:bh}, confirms that this graphical realization of the Lovelock-adS black holes exhaust, up to an isometry, the family of rotationally invariant hypersurfaces in $\mathbb H^{n+1}$ satisfying the curvature condition ${\bm \sigma}_{2k}(A)=0$. This clearly suggests that a notion of {\em extrinsic} Lovelock black hole should be available in this setting and that a corresponding black hole uniqueness result in the line of Theorem \ref{main} should hold true under a suitable ellipticity condition. As in the case discussed in the bulk of this paper, in order to carry out this program we should be able to establish a version of Theorem \ref{al:rem:res} in this hyperbolic context. We also mention that a Penrose-type inequality for asymptotically hyperbolic graphs suitably embedded in $\mathbb H^{n+1}$ has been proved in \cite{ge2015gbc}; 
		the corresponding result in the Einsteinian setting ($k=1$) has been established in
		\cite{de2016alexandrov}, with a previous (non-sharp) contribution appearing in \cite{dahl2013penrose}. Thus, the appropriate version of Theorem \ref{rig:char} is also expected to hold true in this setting. We hope to address the issues raised in this section elsewhere.

		\bibliographystyle{alpha}
		\bibliography{ext-rig-sch-arxiv-v2}	
		
		%\begin{thebibliography}{9999999} 
			%
			%	
			%\end{thebibliography}

	\end{document}